\newcommand{\inR}{\in \mathbb{R}}
\newcommand{\R}{ \mathbb{R}}
\newcommand{\Z}{ \mathbb{Z}}
\newcommand{\eqdef}{\stackrel{\vartriangle}{=}}
\newcommand{\Lop}{{\rm L}}
\newcommand{\dint}{{\rm d}}
\newcommand{\Fourier}{ \mathcal{F}}
\newcommand{\bw}{{\boldsymbol \omega}}
\def\V#1{{\boldsymbol{#1}}}         
\def\Spc#1{{\mathcal{#1}}}  
\def\M#1{{\bf{#1}}}  
\def\Op#1{{\mathrm{#1}}}  
\newcommand{\toC}{\xrightarrow{\mbox{\tiny \ \rm c.  }}}
\renewcommand{\[}{\begin{equation}}
\renewcommand{\]}[1]{\label{eq:#1}\end{equation}}
\newcommand{\ie}{{\em i.e., }}
\providecommand{\revise}[1]{{#1}}
\title{A unifying representer theorem for inverse problems and machine learning \thanks{The research leading to these results has received funding from the Swiss National Science Foundation under Grant 200020-162343.}}
\author{
Michael Unser  
 }
 \institute{Biomedical Imaging Group, \'Ecole polytechnique f\'ed\'erale de Lausanne (EPFL),
Station 17, CH-1015, Lausanne, Switzerland\\              \email{michael.unser@epfl.ch}           
}
\begin{document}

\date{Received: December 4, 2019 / Accepted: July 9, 2020}



\maketitle
\begin{abstract}
Regularization addresses the ill-posedness of the training problem in machine learning or the reconstruction of a signal from a limited number of measurements. The method is applicable whenever the problem is formulated as an optimization task.
The standard strategy consists in augmenting the original cost functional by an energy that penalizes solutions with undesirable behavior.
The effect of regularization is very well understood when the penalty involves a Hilbertian norm. Another popular configuration is the use of an $\ell_1$-norm (or some variant thereof)
that favors sparse solutions. In this paper, we propose a higher-level formulation of regularization
within the 
context of Banach spaces. We present a general representer theorem that characterizes the solutions of a remarkably broad class of optimization problems. We then use our theorem to retrieve a number of known results in the literature such as the celebrated representer theorem of machine leaning for RKHS, Tikhonov regularization, representer theorems for sparsity promoting functionals, the recovery of spikes, as well as a few new ones.
\end{abstract}

\subclass{46N10 \and 47A52 \and 65J20 \and 68T05}

\keywords{convex optimization \and regularization \and representer theorem \and inverse problem \and machine learning \and Banach space}

\vspace{1em}
\noindent Communicated by Tomaso Poggio

%

\section{Introduction}
A recurrent problem in science and engineering is the reconstruction of a multidimensional signal $f: \R^d \to \R$ from a finite number of (possibly noisy) linear measurements $\V y=(y_m)=\V \nu(f) \in \R^M$, where the operator 
$\V \nu=(\nu_m): f \mapsto \V \nu(f)=(\langle \nu_1,f \rangle,\dots,\langle \nu_M, f\rangle )$ symbolizes
the linear measurement process.
The machine-learning version of the problem is the determination of
a function $f: \R^d \to \R$ from a finite number of samples $y_m=f(\V x_m)+\epsilon_m$
where $\epsilon_m$ is a small perturbation term; it is a special case of the former
with $\nu_m=\delta(\cdot-\V x_m)$.
Since a function that takes values over the continuum is an infinite-dimensional entity, the reconstruction problem is inherently ill-posed. 

The standard remedy is to impose an additional minimum-energy requirement which, in effect, regularizes the solution. A natural choice of regularization is a smoothness norm associated with some function space $\Spc X'$ (typically, a Sobolev space), which results in the prototypical formulation of the problem as
\begin{align}
\label{Eq:standardInterp}
S=\arg \min_{f \in \Spc X'} \|f\|_{\Spc X'} \quad \mbox{s.t.} \quad  \langle \nu_m,f\rangle=y_m, \ m=1,\dots,M.
\end{align}
An alternative version that is better suited for noisy data is
\begin{align}
\label{Eq:standardProb}
S=\arg \min_{f \in \Spc X'} \sum_{m=1}^M \left|y_m -\langle \nu_m,f\rangle\right|^2 + \lambda \|f\|^p_{\Spc X'}
\end{align}
with an adequate choice of hyper-parameters $\lambda\in \R^+$ and $p\in[1,\infty)$. We note that the unconstrained form \eqref{Eq:standardProb} 
is
a generalization of \eqref{Eq:standardInterp}: the latter is recovered in the limit by taking $\lambda\to0$.

The term ``representer theorem'' is typically used to designate a parametric formu\-la---preferably, a linear expansion in terms of some basis functions---that spans the whole range of solutions, 
irrespective of the value of the data $\V y \inR^M$.
Representer theorems are valued by practitioners because they indicate the way in which the initial problem can be recast as a finite-dimensional optimization, making it amenable to 
numerical computations. The other benefit is that the 
description 
of the manifold of possible solutions 
provides one with a better understanding of the 
effect of regularization.
%
The best known example is 
the representer theorem for reproducing-kernel Hilbert spaces (RKHS), 
which states that the solution of \eqref{Eq:standardProb} with
$\langle \nu_m,f\rangle=f(\V x_m)$ and a Hilbertian regularization norm
necessarily lives in a subspace of dimension $M$ spanned by kernels centered on the data coordinates $\V x_m$ \cite{deBoor1966} \cite{Kimeldorf1971c} \cite{Poggio1990} \cite{Poggio1990b} \cite{Scholkopf2002}. 
This theorem, in its extended version \cite{Scholkopf2001}, is the foundation for the majority of kernel-based methods for machine learning, including regression, radial-basis functions, and support-vector machines \cite{Scholkopf1997} \cite{Evgeniou2000} \cite{Steinke2008}. There is also a whole line of generalizations of the concept that involves reproducing kernel Banach spaces (RKBS) \cite{Zhang2009} \cite{Zhang2012b} \cite{Xu2019}.
More recently, motivated by the success of $\ell_1$ and total-variation regularization for compressed sensing \cite{Donoho2006} \cite{Candes2007} \cite{Bruckstein2009}, researchers have derived alternative 
representer theorems in order to explain the sparsifying effect of such penalties and their robustness to missing data \cite{Foucart2013} \cite{Unser2016} \cite{Gupta2018} \cite{Boyer2018}. A representer theorem for measures has also been invoked to justify the use of the total-variation 
norm for
the super-resolution localization of spikes \cite{Candes2013b} \cite{Denoyelle2017} \cite{Duval2015} \cite{Poon2019} (see, Section \ref{Sec:Spikes} for  details).

In this paper, we present a unifying treatment of regularization by considering the problem
from the abstract perspective of optimization in Banach spaces.
Our motivation there is essentially two-fold: (1) to get a better
``geometrical'' understanding of the effect of regularization, and (2) 
to state a generic representer theorem that applies to a wide variety of objects describable as elements of some native Banach space.
The supporting theory is developed in Section \ref{Sec:Theory}.
Our formulation takes advantage of the notion of
Banach conjugates which is explained in Section \ref{Sec:Duality}.
We then immediately proceed with the presentation of our key result:
a  generalized representer theorem (Theorem \ref{Theo:GeneralRepBanach})  
that is valid for arbitrary convex data terms and Banach spaces in general, including the non-reflexive ones.
The proof that is developed Section \ref{Sec:GenTheo} is rather soft (or ``high-level"), as it relies exclusively on the powerful machinery of duality mappings and the Hahn-Banach theorem---in other words, there is no need for G\^ateaux derivatives nor subdifferentials, which are often invoked in such contexts.
%
%
The resulting form of the solution in Theorem \ref{Theo:GeneralRepBanach} is enlightening because it separates out the effect of
the measurement operator from that of the regularization topology. Specifically,
the measurement functionals $\nu_1,\dots,\nu_M$ in \eqref{Eq:standardInterp} or \eqref{Eq:standardProb}
specify a linear solution manifold that is then isometrically mapped into the primary space via the conjugate map
$\Op J_\Spc X: \Spc X \to \Spc X'$, which may or may not be linear, depending on wether the regularization norm is Hilbertian or not.

The theory is then complemented with concrete examples of usage of Theorem \ref{Theo:GeneralRepBanach} to illustrate the power of the approach as well as its broad range of applicability.
Section \ref{Sec:StrictConvex} is devoted to the scenario where the regularization norm is strictly convex, which ensures that the solution of the underlying minimization problem is unique.  We make the link with the existing literature by deriving of a number of classical results: Sch\"olkopf's generalized representer theorem for RKHS (Section \ref{Sec:RKHS}), the closed-form solution of continuous-domain Tikhonov regularization with a Hilbertian norm (Section \ref{Sec.Tik}), and the connection with the theory of reproducing kernel Banach spaces (Section \ref{Sec:RKBS}).
In addition, we present a novel representer theorem for $\ell_p$-norm regularization (Section \ref{Sec:ellp}).
Then, in Section  \ref{Sec:Sparse}, we turn our attention to sparsity promoting regularization which is more challenging because the underlying Banach spaces are typically non-reflexive and non-convex. The enabling ingredient there is a recent result by Boyer {\em et al.\ }\cite{Boyer2018}, which allows one to express the extreme points of the solution set in Theorem \ref{Theo:GeneralRepBanach} as a linear combination of a few basic atoms that are selected adaptively (Theorem \ref{Prop:sumofextremes}).
This result, in its simplest incarnation with $\Spc X'=\ell_1(\Z)$, supports the well-documented sparsifying effect of $\ell_1$-norm minimization, which is central to the theory of compressed sensing. By switching to a continuum, we obtain the representer theorem for $\Spc X'=\Spc M(\Omega)$---the space of signed Radon measures on a compact domain $\Omega$---(Section \ref{Sec:Spikes}), which is relevant to super-resolution localization. We then also derive a representer theorem for generalized total-variation (Section \ref{Sec:SparseKernel})---in the spirit of \cite{Unser2017}---that justifies the use of sparse kernel expansions for machine learning, in line with the \revise{generalized LASSO \cite{Roth2004}.}

\section{Mathematical Formulation}
\label{Sec:Theory}
\subsection{Banach Spaces and Duality Mappings}
The notion of Banach space---basically, a vector space equipped with a norm---is remarkably general. Indeed, the elements (or points) of a Banach space can be vectors (e.g., $\V v\in\R^N$), functions (e.g., $f\in L_2(\R^d)$), sequences (e.g., 
$u[\cdot]\in \ell_1(\Z)$), continuous linear functionals (e.g., $f \in \Spc X'$ where $\Spc X'$ is the dual of some primary Banach space), vector-valued functions (e.g., $\V f=(f_1,\dots,f_N)$ with 
$f_n \in L_2(\R^d)$), matrices $(e.g., \M X \inR^{N \times N}$), and, even,
bounded linear operators from a Banach space $\Spc U$ (domain) to another Banach space $\Spc V$ (range) (e.g., $\Op X \in \Spc L(\Spc U, \Spc V)$) \cite{Megginson1998}.

\label{Sec:Duality}
\begin{definition} A normed vector space $\Spc X$ is 
a linear space equipped with a norm, henceforth denoted by $\|\cdot\|_{\Spc X}$.
It is called a Banach space if it is complete in the sense that every Cauchy sequence
in $(\Spc X,\|\cdot\|_{\Spc X})$ converges to an element of $\Spc X$.
It is said to be 
{\em strictly convex} if,
for all $v_1,v_2 \in \Spc X$ such that $\|v_1\|_{\Spc X}=\|v_2\|_{\Spc X}=1$ and $v_1\ne v_2$, one has that ${\|\lambda v_1+(1-\lambda)v_2\|_{\Spc X}}<1$
for any $\lambda\in(0,1)$. Finally, a Hilbert space is a Banach space whose norm is induced by an inner product.

\end{definition}

We recall that $\Spc X'$ (the continuous dual of $\Spc X$) is the space of linear functionals $u : v \mapsto \langle u, v\rangle\eqdef u(v)\inR$ that are continuous on $\Spc X$. It is a Banach space equipped with the dual norm
\begin{align}
\|u\|_{\Spc X'}\eqdef\sup_{\revise{v \in \Spc X}\backslash\{0\}}\frac{\langle u,v\rangle}{\|v\|_{\Spc X}}.
\end{align}
A direct implication of this definition is the generic duality bound
\begin{equation}
\label{Eq:DualBound2}
|\langle u, v\rangle|\le  \|u\|_{\Spc X'} \|v\|_{\Spc X},
\end{equation}
for any $u \in \Spc X, \revise{v \in \Spc X}'$.
In fact, \eqref{Eq:DualBound2} can be interpreted as the Banach generalization of the Cauchy-Schwarz inequality for Hilbert spaces. By invoking the Hahn-Banach theorem, one can also prove that the duality bound is sharp
for any dual pair $(\Spc X,\Spc X')$ of Banach spaces  \cite{Rudin1991}.
This remarkable property inspired
Beurling and Livingston to introduce the notion of duality mapping and to identify conditions of uniqueness \cite{Beurling1962}.
We like to view the latter as the generalization of the classical Riesz map $\Op R: \Spc H' \to \Spc H$ or, rather, its inverse $\Op J_\Spc H=\Op R^{-1}: \Spc H \to \Spc H'$, which describes the isometric isomorphism between a Hilbert space $\Spc H$ and its continuous dual $\Spc H'$ \cite{Riesz1927}. The caveat with Banach spaces is that the duality mapping is
not necessarily bijective nor even single-valued.

\begin{definition}[Duality mapping]
Let $(\Spc X,\Spc X')$ be a dual pair of Banach spaces.
Then, the elements $\revise{v^\ast}\in \Spc X'$ and $\revise{v \in \Spc X}$ form a conjugate pair
if they satisfy: \begin{enumerate}
\item Norm preservation: $\|\revise{v^\ast}\|_{\Spc X'}=\|v\|_{\Spc X}$, and 
\item Sharp duality bound: $\langle \revise{v^\ast},v \rangle_{\Spc X' \times \Spc X}=\|\revise{v^\ast}\|_{\Spc X'}\|v\|_{\Spc X}$
\end{enumerate}
For any given $v\in \Spc X$, the set of admissible conjugates defines the duality mapping 
$$
\mathcal{J}_\Spc X(v)=\{\revise{v^\ast} \in \Spc X': \|\revise{v^\ast}\|_{\Spc X'}=\|v\|_{\Spc X}  \mbox{ and }  \langle \revise{v^\ast},v \rangle_{\Spc X' \times \Spc X}=\|\revise{v^\ast}\|_{\Spc X'}\|f\|_{\Spc X}\},
$$
which is a nonempty subset of $\Spc X'$.
Whenever the duality mapping is a singleton (for instance, when $\Spc X'$ is strictly convex), one also
defines the corresponding duality operator
$\Op J_\Spc X: \Spc X \to \Spc X'$, \revise{which is such that
$\mathcal{J}_\Spc X(v)=\{\revise{v^\ast}=\Op J_\Spc X\{v\}\}$}.
 \end{definition}

We now list the properties of the duality mapping that are relevant for our purpose (see \cite{Beurling1962}, \cite[
Proposition 4.7 p.\ 27, Proposition 1.4, p.\ 43]{Cioranescu2012}, \cite[Theorem 2.53, p.\ 43]{Schuster2012}).
\begin{theorem}[Properties of duality mappings]
\label{Theo:DualityMapping}
Let $(\Spc X,\Spc X')$ be a dual pair of Banach spaces. Then, the following holds:
\begin{enumerate}
\item \label{Item:Existence} Every $\revise{v \in \Spc X}$ admits at least one conjugate $v^\ast\in\Spc X'$.
\item $\mathcal{J}_\Spc X(\lambda v)=\lambda \mathcal{J}_\Spc X(v)$ for any $\lambda \inR$  (homogeneity). 
\item For every $\revise{\revise{v \in \Spc X}}$, the set $\mathcal{J}_\Spc X(v)$ is convex and  weak$^\ast$-closed in $\Spc X'$.
\item The duality mapping is single-valued if $\Spc X'$ is strictly convex; the latter condition is also necessary if $\Spc X$ is reflexive.
\item When $\Spc X$ is reflexive, the duality map is bijective
if and only if both $\Spc X$ and $\Spc X'$ are strictly convex.
\end{enumerate}
\end{theorem}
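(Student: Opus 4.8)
I would dispose of the five items in order, using nothing beyond the Hahn--Banach theorem, the Banach--Alaoglu theorem, and the duality bound~\eqref{Eq:DualBound2}; in particular, no subdifferentials or G\^ateaux derivatives will be needed. Item 1 is the norming-functional form of Hahn--Banach: for $f\neq0$ I would pick $g\in\Spc X'$ with $\|g\|_{\Spc X'}=1$ and $\langle g,f\rangle=\|f\|_{\Spc X}$ and set $f^\ast=\|f\|_{\Spc X}\,g$, taking $f^\ast=0$ when $f=0$; this incidentally shows $J(0)=\{0\}$. For Item 2 I would just check the two defining conditions for $\lambda f^\ast$ tested against $\lambda f$, using $\|\lambda f^\ast\|_{\Spc X'}=|\lambda|\,\|f^\ast\|_{\Spc X'}$ and $\langle\lambda f^\ast,\lambda f\rangle=\lambda^2\langle f^\ast,f\rangle$; this gives $\lambda J(f)\subseteq J(\lambda f)$ for every $\lambda\in\R$, and re-applying it with $1/\lambda$ in place of $\lambda$ and $\lambda f$ in place of $f$ yields the reverse inclusion when $\lambda\neq0$ (the case $\lambda=0$ being $J(0)=\{0\}$).

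For Item 3, the key is to rewrite $J(f)$ (for $f\neq0$) as the intersection of the closed ball $\{g\in\Spc X':\|g\|_{\Spc X'}\le\|f\|_{\Spc X}\}$ with the affine hyperplane $\{g\in\Spc X':\langle g,f\rangle=\|f\|_{\Spc X}^2\}$: indeed, if $\langle g,f\rangle=\|f\|_{\Spc X}^2$ and $\|g\|_{\Spc X'}\le\|f\|_{\Spc X}$, the duality bound~\eqref{Eq:DualBound2} forces $\|g\|_{\Spc X'}=\|f\|_{\Spc X}$, so $g\in J(f)$; the reverse containment is immediate. The ball is convex and weak$^\ast$-closed by Banach--Alaoglu, the hyperplane is convex and weak$^\ast$-closed because $g\mapsto\langle g,f\rangle$ is weak$^\ast$-continuous by the definition of the weak$^\ast$ topology, and the intersection inherits both properties ($J(0)=\{0\}$ covering the trivial case).

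For Item 4, sufficiency is quick: if $\Spc X'$ is strictly convex and $f\neq0$, any two conjugates of $f$ share the norm $\|f\|_{\Spc X}>0$, their midpoint also lies in $J(f)$ by Item 3 and so has the same norm, and strict convexity then forces the two conjugates to coincide. For necessity I would argue contrapositively: assuming $\Spc X$ reflexive and $\Spc X'$ not strictly convex, pick distinct unit vectors $g_1,g_2\in\Spc X'$ whose midpoint $g$ has unit norm; reflexivity makes the closed unit ball of $\Spc X$ weakly compact, so the supremum defining $\|g\|_{\Spc X'}$ is attained at some $f$ with $\|f\|_{\Spc X}=1$ and $\langle g,f\rangle=1$; since $\langle g_i,f\rangle\le1$, averaging forces $\langle g_1,f\rangle=\langle g_2,f\rangle=1$, so $g_1,g_2\in J(f)$ and $J$ is not single-valued.

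For Item 5 I would work in the single-valued regime (so $\Spc X'$ is strictly convex by Item 4) and exploit reflexivity to identify $\Spc X''=\Spc X$ canonically and isometrically; comparing the defining conditions then yields the symmetry relation $f^\ast\in J(f)\iff f\in J'(f^\ast)$, where $J'$ is the duality mapping of the pair $(\Spc X',\Spc X'')$. Since $\Spc X$ reflexive implies $\Spc X'$ reflexive, Item 4 applied to $J'$ says $J'$ is single-valued exactly when $\Spc X''=\Spc X$ is strictly convex. If both $\Spc X$ and $\Spc X'$ are strictly convex, then $J$ and $J'$ are everywhere-defined single-valued maps (Item 1) and the symmetry relation makes them mutual inverses, so $J$ is a bijection; conversely, if $J$ is a bijection, the symmetry relation forces $J'=J^{-1}$, which is a function, so $\Spc X=\Spc X''$ is strictly convex while $\Spc X'$ is strictly convex by Item 4. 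I expect the converse halves of Items 4 and 5 to be the only real obstacle, and both hinge on reflexivity---it is what lets the supremum in the dual norm be attained (Item 4) and what makes the identification $\Spc X''=\Spc X$, hence the symmetry relation, available (Item 5); everything else is bookkeeping with the duality bound.
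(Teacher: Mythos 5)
Your proof is correct, but there is nothing in the paper to compare it against: the paper does not prove Theorem~\ref{Theo:DualityMapping} at all, it simply cites Beurling--Livingston and the monographs of Cioranescu and Schuster et al. What you have written is a clean, self-contained derivation along the classical lines of those references: Hahn--Banach norming functionals for Item~1, direct verification for Item~2, the description of $J(f)$ as the intersection of the closed ball of radius $\|f\|_{\Spc X}$ with the weak$^\ast$-closed hyperplane $\{g:\langle g,f\rangle=\|f\|_{\Spc X}^2\}$ for Item~3 (the duality bound upgrading ``$\le$'' to ``$=$'' is exactly the right observation), attainment of the dual norm on the weakly compact unit ball for the converse of Item~4, and the symmetry $f^\ast\in J(f)\iff f\in J'(f^\ast)$ under the canonical isometric identification $\Spc X''=\Spc X$ for Item~5. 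Two small steps deserve to be made explicit. First, in the converse of Item~4, failure of strict convexity only hands you some $\lambda\in(0,1)$ with $\|\lambda g_1+(1-\lambda)g_2\|_{\Spc X'}=1$; you should note that convexity of $t\mapsto\|tg_1+(1-t)g_2\|_{\Spc X'}$ together with the endpoint values then forces the whole segment, hence the midpoint, onto the unit sphere, which is what licenses your choice of $g=\tfrac12(g_1+g_2)$. Second, in Item~5 the word ``bijective'' must be read as ``single-valued and bijective'' for the converse direction to make sense; with that reading your identification $J'(g)=J^{-1}(\{g\})$ and the subsequent appeal to the converse of Item~4 for the pair $(\Spc X',\Spc X'')$ (using that $\Spc X'$ is reflexive because $\Spc X$ is) close the argument correctly.
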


The most favorable scenario is covered by Item 5.
In that case, the duality map is invertible with $v=(\revise{v^\ast})^\ast=\Op J_{\Spc X'}\Op J_\Spc X\{v\}$; that is, $\Op J_\Spc X^{-1}=\Op J_{\Spc X'}$, in conformity with the property
that $\Spc X''=\Spc X$.

We now prove that the duality map is linear if and only if $\Spc X=\Spc H$ is a Hilbert space. In that case, the unitary operator $\Op J_\Spc H: \Spc H \to \Spc H'$ is precisely the inverse of the Riesz map
$\Op R: \Spc H'\to \Spc H$. 

\begin{proposition}
\label{Prop:Hilbert}
Let $(\Spc X,\Spc X')$ be a dual pair of Banach spaces such that $\Spc X'$ is strictly convex. Then, the duality map
$\Op J_\Spc X: \Spc X \to \Spc X', v \mapsto \Op J_\Spc X\{v\}=\revise{v^\ast}$ is linear 
if and only if $\Spc X$ is a Hilbert space.
\end{proposition}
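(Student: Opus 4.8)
The plan is to pivot everything on the single identity
$\langle \Op J_{\Spc X}\{x\}, x\rangle_{\Spc X' \times \Spc X} = \|x\|_{\Spc X}^2$,
which follows at once by multiplying the two defining properties of a conjugate pair (norm preservation and the sharp duality bound). Note first that the hypothesis of strict convexity of $\Spc X'$ ensures, via Theorem~\ref{Theo:DualityMapping}, that $\Op J_{\Spc X}$ is genuinely single-valued, so the statement is meaningful; recall also that $\Op J_{\Spc X}$ is always homogeneous (Theorem~\ref{Theo:DualityMapping}, item~2), so ``linear'' here amounts to ``additive'', and it is additivity that I will use.

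For the direction ``$\Spc X$ Hilbert $\Rightarrow$ $\Op J_{\Spc X}$ linear'': since $\Spc H'$ is itself a Hilbert space it is strictly convex, so the hypothesis is consistent and $\Op J_{\Spc X}$ is single-valued. Identifying $\Spc H'$ with $\Spc H$ through the Riesz map $\Op R$, the duality pairing becomes the inner product, and the sharp-bound condition $\langle f^\ast, f\rangle_{\Spc H} = \|f^\ast\|_{\Spc H}\|f\|_{\Spc H}$ is exactly the equality case in the Cauchy--Schwarz inequality, which forces $f^\ast$ to be a nonnegative multiple of $f$ when $f \ne 0$; norm preservation then pins down $f^\ast = f$. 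Hence $\Op J_{\Spc X} = \Op R^{-1}$, which is linear (and this incidentally confirms that $\Op J_{\Spc H}$ is the inverse Riesz map, as announced before the statement).

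For the converse, suppose $\Op J_{\Spc X}$ is linear and set $B(x,y) \eqdef \langle \Op J_{\Spc X}\{x\}, y\rangle_{\Spc X' \times \Spc X}$. This form is linear in $y$ because $\Op J_{\Spc X}\{x\}\in\Spc X'$, and linear in $x$ by the hypothesis together with the linearity of the pairing in its first argument; moreover $|B(x,y)| \le \|x\|_{\Spc X}\|y\|_{\Spc X}$ by the duality bound \eqref{Eq:DualBound2} combined with norm preservation, so $B$ is a bounded bilinear form with $B(x,x) = \|x\|_{\Spc X}^2$. Expanding $\|x \pm y\|_{\Spc X}^2 = B(x \pm y, x \pm y)$ by bilinearity and summing the two resulting identities, the cross terms $B(x,y) + B(y,x)$ cancel and one is left with the parallelogram identity $\|x+y\|_{\Spc X}^2 + \|x-y\|_{\Spc X}^2 = 2\|x\|_{\Spc X}^2 + 2\|y\|_{\Spc X}^2$. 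By the Jordan--von Neumann theorem the norm of $\Spc X$ is then induced by an inner product, and since $\Spc X$ is complete this makes it a Hilbert space (with inner product recovered by polarizing $B$).

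I do not anticipate a genuine obstacle: the argument is ``soft'', in the spirit of the rest of Section~\ref{Sec:Theory}. The two points that merit care are the identity $B(x,x) = \|x\|_{\Spc X}^2$ — the only place the full strength of the definition of the duality mapping is invoked — and the standard appeal to the Jordan--von Neumann characterization of inner-product norms; the remainder is bookkeeping. A pleasant feature worth stressing is that symmetry of $B$ is never required: its antisymmetric part drops out automatically when the two polarization-type expansions are added, which is what lets linearity of $\Op J_{\Spc X}$ alone deliver the parallelogram law.
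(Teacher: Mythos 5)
Your proof is correct, and in both directions it rests on the same pivot as the paper's: the identity $\langle \Op J_{\Spc X}\{x\},x\rangle_{\Spc X'\times\Spc X}=\|x\|_{\Spc X}^2$ and the bilinearity of $B(x,y)=\langle \Op J_{\Spc X}\{x\},y\rangle_{\Spc X'\times\Spc X}$ granted by the linearity hypothesis. Where you diverge is in how the converse is closed: the paper symmetrizes $B$ directly, setting $\langle x,y\rangle_{\Spc X}=\tfrac12 B(x,y)+\tfrac12 B(y,x)$, checks that this is a symmetric positive-definite bilinear form with $\langle x,x\rangle_{\Spc X}=\|x\|_{\Spc X}^2$, and declares it the inner product inducing the norm; you instead expand $\|x\pm y\|_{\Spc X}^2=B(x\pm y,x\pm y)$, observe that the cross terms cancel upon summing, obtain the parallelogram identity, and invoke Jordan--von Neumann. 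The two finishes are essentially polarizations of one another --- polarizing your $B$ recovers exactly the paper's symmetrized form --- but each buys something slightly different: the paper's version is constructive (the inner product is written down explicitly, with symmetry built in by fiat), while yours cleanly isolates the observation that symmetry of $B$ is never needed, since its antisymmetric part vanishes on the diagonal, and outsources the remaining work to a classical characterization theorem. Your forward direction (equality case of Cauchy--Schwarz under the Riesz identification) is also a touch more explicit than the paper's bare citation of the Riesz representation theorem. No gaps.
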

\begin{proof} First, we recall that all Hilbert spaces are strictly convex.
Consequently, the indirect part of the statement is Riesz' celebrated representation theorem, which identifies the canonical linear isometry $\Op J_\Spc X=\Op R^{-1}$ between a Hilbert space and its dual \cite{Rudin1991}.
As for the converse implication, we show that the underlying inner product is
\begin{align}
\langle u, v\rangle_{\Spc X}
&=\tfrac{1}{2} \langle \Op J_\Spc X\{u\},v \rangle_{\Spc X'\times \Spc X}+\tfrac{1}{2}\langle \Op J_\Spc X\{v\},u\rangle_{\Spc X'\times \Spc X}.
\end{align}
Its bilinearity follows from the bilinearity of the duality product and the linearity of $\Op J_\Spc X$, while the symmetry in $u$ and $v$ is obvious. 
Finally, the definition of the conjugate yields 
\begin{align}\langle v, v\rangle_{\Spc X}=\langle \Op J_\Spc X\{v\},v \rangle_{\Spc X'\times \Spc X}=\langle \revise{v^\ast}, v \rangle_{\Spc X'\times \Spc X}=\|v\|^2_{\Spc X},
\end{align}
which confirms that the bilinear form $\langle \cdot, \cdot\rangle_{\Spc X}$ is positive-semi-definite. Hence, it is the inner product that induces the
$\|\cdot\|_{\Spc X}$-norm.
\end{proof}

As an example, we provide the expression of the (unique) Banach conjugate 
$\revise{v^\ast}=\Op J_\Spc X\{v\} \in L_q(\R^d)$ of a function 
$v \in L_p(\R^d)\backslash \{0\}$ with $1<p<\infty$ and $\frac{1}{p}+\frac{1}{q}=1$:
\begin{align}
\revise{v^\ast}(\V x)=\frac{\left| v(\V x)\right|^{p-1}}{\|v\|^{p-2}_{L_p}} {\rm sign}\big(f(\V x)\big). 
\end{align}
This formula is intimately connected to H\"older's inequality. In particular, 
the $L_2$ conjugation map with $p=q=2$ is an identity.

\subsection{General representer theorem}
\label{Sec:GenTheo}
We now make use of the powerful tool of conjugation to characterize the solution
of a broad class of unconstrained optimization problems in Banach space.
  \revise{Let us note that the result also covers the equality constraint of Problem \eqref{Eq:standardInterp} if one 
selects the barrier functional 
\begin{align*}
E_{\rm equal}(\V y,\V z)=
\begin{cases}
0, & \V y=\V z\\
+\infty, & \mbox{ otherwise. }
\end{cases}
\end{align*}
}


\begin{theorem}[General Banach representer theorem]
\label{Theo:GeneralRepBanach}
Let us consider the following setting:
\begin{itemize}
\item A dual pair $(\Spc X,\Spc X')$ of Banach spaces.
\item The analysis subspace $\Spc N_\V \nu={\rm span}\{\nu_m\}_{m=1}^M \subset \Spc X$
with the $\nu_m$ being linearly independent.
\item The linear measurement operator 
$\V \nu: \Spc X' \to \R^M: f \mapsto \big(\langle \nu_1,f \rangle, \dots,\langle \nu_M, f \rangle\big)$ (it is weak$^\ast$ continuous on $\Spc X'$ because 
$\nu_1,\dots,\nu_M\in \Spc X$).
\item The loss functional $E: \R^M \times \R^M \to \R^{+}\cup \{+\infty\}$ \revise{that is proper, weak lower-semi-continuous and convex in its second argument}.
\item Some arbitrary strictly increasing and convex function $\psi: \R^+ \to \R^+$. 
\end{itemize}
Then, for any fixed $\V y\inR^M$, the solution set of the generic optimization problem
\begin{align}
\label{Eq:GenericOptimizationProb}
S=\arg \min_{f \in \Spc X'}  E\big(\V y, \V \nu(f)\big)+ \psi\left( \|f\|_{\Spc X'}\right)
\end{align}
is nonempty, convex, and weak$^\ast$-compact. If $E$ is strictly convex (or if it imposes the 
equality $\V y=\V \nu(f)$), then
any solution $f_0 \in
S\subset \Spc X'$ is a $(\Spc X',\Spc X)$-conjugate of a common
\begin{align}
\nu_0=\sum_{m=1}^M a_m \nu_m\in \Spc N_\V \nu\subset \Spc X
\end{align}
with a suitable set of weights $\V a \inR^M$; i.e., $S\subseteq \mathcal{J}_\Spc X(\nu_0)$.
Moreover, if $\Spc X$ is 
strictly convex and $f\mapsto\psi(\|f\|_{\Spc X'})$ is strictly convex, 
then the solution is unique with $f_0=\Op J_\Spc X\{\nu_0\} \in \Spc X'$ (Banach conjugate of $\nu_0$). 
 In particular, if $\Spc X$ is a Hilbert space, then
$f_0=\sum_{m=1}^M a_m \nu_m^\ast$, where $\nu_m^\ast$ is the Riesz conjugate of $\nu_m$.
\end{theorem}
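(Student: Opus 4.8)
The plan is to establish the three assertions in sequence: existence/compactness of $S$, the conjugacy structure $S \subseteq J(\nu_0)$, and finally uniqueness plus the Hilbert-space specialization. For the first part, I would argue that the objective in \eqref{Eq:GenericOptimizationProb} is weak$^\ast$ lower-semi-continuous (since $\V\nu$ is weak$^\ast$ continuous, $E$ is weakly l.s.c. and hence composes to give a weak$^\ast$ l.s.c. term, while $f\mapsto\psi(\|f\|_{\Spc X'})$ is weak$^\ast$ l.s.c. because the dual norm is), and coercive in $\|f\|_{\Spc X'}$ (because $\psi$ is strictly increasing and $E\geq 0$). Coercivity confines the minimization to a closed ball of $\Spc X'$, which is weak$^\ast$-compact by Banach--Alaoglu; a weak$^\ast$ l.s.c. function on a weak$^\ast$-compact set attains its minimum, so $S\neq\emptyset$. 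Convexity of $S$ follows from convexity of the objective (composition of convex $E$ with the affine map $f\mapsto(\V y,\V\nu(f))$, plus the convex increasing function of a norm), and $S$ is weak$^\ast$-closed as a sublevel set, hence weak$^\ast$-compact.

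For the conjugacy structure — the heart of the theorem — let $f_0\in S$ with minimal value $V$ and put $R=\|f_0\|_{\Spc X'}$. The key observation is that $f_0$ must \emph{also} minimize $E(\V y,\V\nu(f))$ over the ball $B_R=\{f:\|f\|_{\Spc X'}\le R\}$: if some $\tilde f\in B_R$ gave a strictly smaller data term, then (using $\psi$ nondecreasing and $\psi(\|\tilde f\|_{\Spc X'})\le\psi(R)$) $\tilde f$ would beat $f_0$ in \eqref{Eq:GenericOptimizationProb}. Because $E$ is strictly convex (or enforces $\V y=\V\nu(f)$), the minimal data value over $B_R$ is attained only on the affine slice $\{f: \V\nu(f)=\V\nu(f_0)=:\V y_0\}$ — that is, every solution $f_0$ shares the same measurement vector $\V y_0$, and the problem reduces to: among all $f$ with $\V\nu(f)=\V y_0$, $f_0$ has minimal norm. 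Now I invoke duality/Hahn--Banach exactly as in the classical minimum-norm argument: the functional that is constant $=\V y_0$ on the affine subspace $\{\V\nu(f)=\V y_0\}$ is, by the construction of the dual norm, represented by some $\nu_0=\sum_m a_m\nu_m\in\Spc N_{\V\nu}$ with $\|\nu_0\|_{\Spc X}\cdot R = \langle\nu_0,f_0\rangle$ and $\langle\nu_0,f_0\rangle$ fixed across all solutions; rescaling $\nu_0$ so that $\|\nu_0\|_{\Spc X}=R$ makes $f_0$ satisfy both defining conditions of a conjugate of $\nu_0$, i.e. $f_0\in J(\nu_0)$. Since $\V y_0$ and hence $\langle\nu_0,f_0\rangle$ and $R$ are common to all elements of $S$, the same $\nu_0$ works for every $f_0\in S$, giving $S\subseteq J(\nu_0)$.

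For uniqueness: if additionally $\Spc X$ is strictly convex then by Theorem \ref{Theo:DualityMapping}, Item 4, the duality mapping $J$ is single-valued (strict convexity of $\Spc X$ forces that of $\Spc X'$ in the reflexive case, but more directly single-valuedness of $J$ on $\Spc X$ is what strict convexity of $\Spc X$ delivers through the sharp-duality-bound condition), so $J(\nu_0)=\{\Op J_{\Spc X}\{\nu_0\}\}$ and $S$ is a singleton $\{f_0\}$ with $f_0=\Op J_{\Spc X}\{\nu_0\}$; alternatively, if $f\mapsto\psi(\|f\|_{\Spc X'})$ is strictly convex the objective is strictly convex on the affine set and uniqueness is immediate. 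Finally, when $\Spc X=\Spc H$ is Hilbert, Proposition \ref{Prop:Hilbert} gives that $\Op J_{\Spc X}$ is linear, so $f_0=\Op J_{\Spc X}\{\nu_0\}=\Op J_{\Spc X}\big\{\sum_m a_m\nu_m\big\}=\sum_m a_m\Op J_{\Spc X}\{\nu_m\}=\sum_m a_m\nu_m^\ast$, the stated Riesz expansion.

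The main obstacle I anticipate is making the reduction ``$f_0$ solves the norm-minimization on the affine slice $\{\V\nu(f)=\V y_0\}$'' fully rigorous in the non-reflexive, possibly non-strictly-convex setting, and then extracting the common $\nu_0$ from $\Spc N_{\V\nu}$ via Hahn--Banach with the correct normalization so that both conjugacy conditions hold \emph{simultaneously} for \emph{every} solution. The strict-convexity (or equality-constraint) hypothesis on $E$ is precisely what pins down the common $\V y_0$, and keeping track of that dependence — rather than just treating one fixed solution — is the delicate bookkeeping step.
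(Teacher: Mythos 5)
Your plan follows essentially the same route as the paper's proof: existence and weak$^\ast$-compactness via coercivity, lower semicontinuity and Banach--Alaoglu; reduction, through the strict convexity of $E$ (or the equality constraint), to a minimum-norm interpolation problem with a measurement vector common to all minimizers; extraction of $\nu_0\in\Spc N_{\V\nu}$ by Hahn--Banach together with the existence of a conjugate of the induced functional $\lambda$ inside the finite-dimensional (hence reflexive) space $\Spc N_{\V\nu}$; and the Hilbert case via the linearity of the Riesz map (Proposition \ref{Prop:Hilbert}). That is exactly the paper's three-step argument, and the delicate points you flag (the common $\V y_0$ and the simultaneous validity of both conjugacy conditions) are the right ones.

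One localized inaccuracy in the uniqueness step: strict convexity of $\Spc X$ does \emph{not} imply strict convexity of $\Spc X'$ in the reflexive case (it implies smoothness of $\Spc X'$), and single-valuedness of $J:\Spc X\to\Spc X'$ is governed by strict convexity of $\Spc X'$, not of $\Spc X$ --- this is precisely Item 4 of Theorem \ref{Theo:DualityMapping}, and it is also what the paper's own proof invokes (``when $\Spc X'$ is strictly convex\dots''), in slight tension with the wording of the theorem statement. Your parenthetical justification is therefore wrong as stated, but the conclusion survives: uniqueness of the minimizer already follows from the strict convexity of $f\mapsto\psi(\|f\|_{\Spc X'})$, and the well-definedness of the formula $f_0=\Op J_{\Spc X}\{\nu_0\}$ should be attributed to the strict convexity of $\Spc X'$ (equivalently, smoothness of $\Spc X$) rather than to that of $\Spc X$.
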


The condition of unicity requires the strict convexity of both $\psi: \R^+ \to \R$ and
$f \mapsto \|f\|_{\Spc X'}$. This applies to
 Banach spaces such as $\Spc X'=\big(L_q(\R^d)\big)'=L_p(\R^d)$  (up to some isometric isomorphism) with $1<p<\infty$ and
the canonical choice of regularization $R(f)=\lambda \|f\|_{L_p}^p$ with $\psi(t)=\lambda |t|^p$ being strictly convex. 
While the solution of \eqref{Eq:GenericOptimizationProb} also exists for
Banach spaces
such as $\Spc M(\R^d)=\big(C_0(\R^d)\big)'$ or $L_\infty(\R^d)=\big(L_1(\R^d)\big)'$, the uniqueness is usually lost in such non-reflexive scenarios (see Section \ref{Sec:Sparse}).

\begin{proof}
The proof uses standard arguments in convex analysis together with a dual reformulation of the problem inspired from the interpretation of best interpolation given by Carl de Boor in \cite{DeBoor1976}.\\[-2ex]
\item {\em (i) Existence and Reformulation as a Generalized Interpolation Problem}.\\
First, we recall that the basic properties of (weak lower semi-) 
continuity 
and coercivity\footnote{ 
The functional
$F: \Spc X \to\R$, where $\Spc X$ is a Banach space, is said to be coercive if $F(f)\to \infty$ as $\|f\|_\Spc X \to \infty$.} 
are
preserved though functional composition. 
The functional $f \mapsto  \|f\|_{\Spc X'}$ is convex, (norm-)continuous and \revise{coercive}
on $\Spc X'$ from the definition of a norm. 
Since $\psi: \R^+\to\R^+$ is strictly increasing and convex, 
it is necessarily continuous and coercive. This ensures that $f \mapsto \psi\left( \|f\|_{\Spc X'}\right)$ is endowed with the same three basic properties.
The linear measurement operator $\V \nu: \Spc X' \to \R^N$ is continuous on $\Spc X'$
by assumption (\ie  $\nu_m \in \Spc X \Rightarrow \nu_m \in \Spc X''$ because of the canonical embedding of a Banach space in its bidual)
and trivially convex. Since $\V z \mapsto E\big(\V y, \V z\big)$ is lower semicontinuous on $\R^d$ and convex, this implies by composition the lower-semicontinuity and convexity of
$f \mapsto E\big(\V y, \V \nu(f)\big)$.
Consequently, the functional
$
f\mapsto F(f)=E\big(\V y, \V \nu(f)\big)+ \psi\left( \|f\|_{\Spc X'}\right)
$
is (weakly)
lower-semicontinuous, convex, and coercive on $\Spc X'$, which guarantees the existence of the solution (as well as the convexity and closedness of the solution set) by a standard argument in convex analysis \cite
{Ekeland1999}---see \cite[Proposition 8]{Gupta2018} for the non-reflexive case.
Moreover, unicity is ensured when $f \mapsto F(f)$ is strictly convex
which happens to be the case when both $\V z \mapsto E\big(\V y, \V z\big)$
and $f \mapsto \psi\left( \|f\|_{\Spc X'}\right)$ are strictly convex.

For the general (not necessarily unique) scenario, we take advantage of the strict convexity of $E(\V y, \cdot)$ 
to show that all minimizers of $F(f)$ share
a common measurement vector $\V z_0= \V \nu(f_0) \in \R^M$. 
\revise{To that end, we pick any two distinct solutions $f_i \in S, i=1,2$ with corresponding measurements $\V z_i= \V \nu(f_i)$
and regularization cost $r_i=\lambda \psi(\|f_i\|_{\Spc X'})$. The convexity of $S$ implies that, for any $\alpha \in (0,1)$,
$f=\alpha f_1 + (1-\alpha)f_2 \in S$ with $\V z=\V \nu(f)= \alpha \V z_1 + (1-\alpha)\V z_2$ and $F(f)=F(f_i), i=1,2$.
Let us now assume that $\V z_1 \ne \V z_2$. Then, by invoking the strictly convexity of
 $\V z \mapsto E(\V y,\V z)$ and the convexity of $f \mapsto \lambda \psi(\|f\|_{\Spc X'})$, we get that
\begin{align*}
F(f)&=E\big(\V y, \alpha \V z_1 + (1-\alpha) \V z_2 \big) + \lambda \psi\big( \|\alpha f_1 + (1-\alpha) f_2\|_{\Spc X'}\big)\\
&< \underbrace{\alpha E\left(\V y, \V z_1\right) +  (1-\alpha) E\left(\V y, \V z_2\right) + \alpha r_1 + (1-\alpha) r_2}
_{\alpha F(f_1) + (1-\alpha) F(f_2)=F(f)},
\end{align*}
which is a contradiction. 
It follows that $\V z_i= \V \nu(f_i)=\V z_0$ which, in turn, implies that the optimal regularization cost $r_i=r_0$ is the same for all $f_i \in S$.}
Although $\V z_0= \V \nu(f_0) \in \R^M$ is usually not known before hand, this property 
provides us with a convenient parametric characterization of the solution set as
\begin{align}
\label{Eq:GInterpol}
S_{\V z}=\arg \min_{f \in \Spc X'} \|f\|_{\Spc X'} \mbox{ s.t. } \V \nu(f)=\V z,
\end{align}
where $\V z$ ranges over $\R^M$. \revise{In this reformulation, we also exploit the property that the minimization of
$\|f\|_{\Spc X'}$ is equivalent to that of $\psi(\|f\|_{\Spc X'})$ because the mapping between the two quantities is monotone.}
\\[-2ex]
\item 
{\em (ii) Explicit Resolution of the Generalized Interpolation Problem \eqref{Eq:GInterpol}}.\\
\revise{The linear independence of the functionals
$\nu_m$  ensures} that any $\nu\in \Spc N_\V \nu$ 
 has the unique expansion $\nu=\sum_{m=1}^M a_m \nu_m$. 
  Based on this representation, we define the
linear functional 
$$
\nu\mapsto\revise{\zeta}(\nu)=\sum_{m=1}^{M} a_m z_m
$$
with $\V z=\V z_0$ fixed.
By construction,
$\revise{\zeta}$ is continuous $\big(\Spc N_\V \nu,\|\cdot\|_{\Spc X}\big)\toC \R$ with
$|\revise{\zeta}(\nu)|\le \|\revise{\zeta}\|\; \|\nu\|_{\Spc X}$,
where
$
\|\revise{\zeta}\|=\sup_{\nu \in \Spc N_\V \nu:\; \|\nu\|_{\Spc X}= 1} \revise{\zeta}(\nu)<\infty$.
Moreover, the Hahn-Banach theorem 
ensures the existence of a continuous, norm-preserving extension of $\revise{\zeta}$ to the whole Banach space $\Spc X$; that is, an element $f_0\in \Spc X'$ such that
$$
\|f_0\|_{\Spc X'}=\sup_{g \in \Spc X:\; \|g\|_{\Spc X}=1} \langle f_0, g\rangle=\|\revise{\zeta}\|.
$$
The connection between the above statement and the generalized interpolation problem \eqref{Eq:GInterpol}
is that the complete set of continuous extensions of $\revise{\zeta}$ to $\Spc X\supset \Spc N_{\V \nu}$ is given by
$$
U=\{f \in \Spc X': \langle f,\nu\rangle=\revise{\zeta}(\nu) \mbox{ for all } \nu \in \Spc N_{\V \nu}\}
$$
with the property that
\begin{align}
\label{Eq:SolutionSet}
f_0\in\arg \inf_{f\in U} \|f\|_{\Spc X'}=S_{\V z_0} \quad \Leftrightarrow \quad  
\|f_0\|_{\Spc X'}=\|\revise{\zeta}\|.
\end{align}
The next fundamental observation is that $\Spc N_{\V \nu}=\big(\Spc N'_{\V \nu}\big)'$ because both spaces are of finite dimension \revise{$M$} and, hence, reflexive.
Consequently, for any $\nu_0 \in \mathcal{J}_\Spc X(\revise{\zeta}) \subseteq \big(\Spc N'_{\V \nu}\big)'=\Spc N_\V \nu$, we have
that $\|\nu_0\|_{\Spc X}=\|\revise{\zeta}\|$ and $\revise{\zeta}(\nu_0)=\|\nu_0\|^2_{\Spc X}$, as well as
$\|\nu_0\|_{\Spc X}=\|f_0\|_{\Spc X'}$ for all $f_0 \in S_{\V z_0}$ because of \eqref{Eq:SolutionSet}.
Since $f_0\in U\subset \Spc X'$ and $\nu_0 \in \Spc N_{\V \nu} \subset \Spc X$, this yields
$$
\langle f_0,\nu_0\rangle=\lambda(\nu_0)
=\|f_0\|_{\Spc X'}\|\nu_0\|_{\Spc X},
$$
which implies that $f_0 \in \mathcal{J}_\Spc X(\nu_0)$ with $\mathcal{J}_\Spc X$ the duality mapping from $\Spc X$ to $\Spc X'$.\\[-2ex]
\item 
{\em (iii) Structure of the Solution Set.}\\
We have just shown that $S_{\V z_0}\subseteq \mathcal{J}_\Spc X(\nu_0)$ for any extremal element $\nu_0 \in \{g \in \Spc N_{\V \nu}:
\revise{\zeta}(g)=\|\revise{\zeta}\|\, \|g\|_{\Spc X}, \|g\|_{\Spc X}=\|\revise{\zeta}\|\}$. 
We now 
deduce that $S_{\V z_0}$ is weak$^\ast$-compact since it is included in
the closed ball in $\Spc X'$ of radius $\|f_0\|_{\Spc X'}<\infty$, which is itself weak$^\ast$-compact, by the Banach-Alaoglu theorem.

%
%
%
%
%
When $\Spc X'$ is strictly convex, the situation is simpler because the duality mapping from $\Spc X$ to $\Spc X'$ is single-valued and the solution $f_0\in \Spc X'$ is unique. 
Moreover, the latter conjugate map is linear if and only if $\Spc X$ is a Hilbert space, by Proposition \ref{Prop:Hilbert}.
\revise{\qed}
\end{proof}

Note that the existence of the conjugate of $\nu_0 \in \Spc N_\V \nu \subset\Spc X$ is essential to the argumentation.
This is the reason why the problem is formulated with $f \in \Spc X'$ subject to the hypothesis
that $\nu_1,\dots,\nu_M \in \Spc X$ (weak$^\ast$ continuity). These considerations are inconsequential
in the simpler reflexive scenario where the role of the two spaces is interchangeable since $\Spc X=\Spc X''$. 
\revise{The hypothesis of linear independence of the $\nu_m$ in Theorem \ref{Theo:GeneralRepBanach} is only made for convenience. When it does not hold, one can adapt the proof by picking a basis of $\Spc N_{\V \nu}$ of reduced dimension $M'<M$, which then leads to a corresponding reduction in the number $M'$ of degrees of freedom of the solution.}



In the sequel, as we shall apply Theorem \ref{Theo:GeneralRepBanach} to concrete scenarios, we shall implicitly interpret $f \in \Spc X'$ in \eqref{Eq:GenericOptimizationProb} as a function (or, eventually, a vector) rather than a continuous linear functional on $\Spc X$ (the abstract definition of an element of the dual space).
This is acceptable provided that the defining space $\Spc X'$ is isometrically embedded in some classical function spaces such as $L_p(\R^d)$ because of the bijective mapping (isometric isomorphism) that relates the two types of entities; for instance, there is a unique element of $f \in L_p(\R)$ with 
$p$ the conjugate exponent of $q\in[1,\infty)$ such that the linear functional $\zeta \in \big(L_q(\R^d)\big)'$ can be specified as
$\zeta(g)= \langle f,g\rangle=\int_{\R^d} f(\V x)g(\V x) \dint \V x$ and vice versa. This allows us to identify $\zeta=\zeta_f$ as $f \in L_p(\R^d)$, while it also gives a precise meaning to identities such as $L_p(\R^d)=\big(L_q(\R^d)\big)'$.

\section{Strictly-Convex Regularization}
\label{Sec:StrictConvex}
The solution of the optimization problem in Theorem \ref{Theo:GeneralRepBanach} 
is unique whenever the Banach space $\Spc X$ (or $\Spc X'$) is reflexive and strictly convex.
This is the setting that has been studied the most in the literature. We now illustrate the unifying
character of Theorem \ref{Theo:GeneralRepBanach} by using it to retrieve the key results in this area; that is, the classical kernel methods for machine learning in RKHS (Section \ref{Sec:RKHS}),
the resolution of linear inverse problems with Tikhonov regularization (Section \ref{Sec.Tik}), and the link with reproducing kernel Banach spaces (Section \ref{Sec:RKBS}).
In addition, we make use of the conjugate map to present a novel perspective on $\ell_p$ regularization for $p>1$ in Section \ref{Sec:ellp}.

\subsection{Kernel/RKHS Methods in Machine Learning}
\label{Sec:RKHS}

Here, the search space $\Spc X'$ is a reproducing-kernel Hilbert space on $\R^d$ denoted by $\Spc H$ with
$\|f\|^2_{\Spc H}=\langle f, f\rangle_{\Spc H}$, where $\langle \cdot, \cdot\rangle_{\Spc H}$ is the underlying inner product. The predual space is $\Spc X=\Spc H'$ which agrees with $\Spc X'=\Spc H''=\Spc H$ (reflexive scenario).
The RKHS property \cite{Aronszajn1950} is equivalent to the existence of a (unique) positive-definite kernel $r_\Spc H: \R^d \times \R^d \to \R$ (the reproducing kernel of $\Spc H$) such that
\begin{align}
(i) & \ \  r_\Spc H(\cdot,\V x_m) \in \Spc H 
\\
\label{Eq:RKprop}
(ii) &\ \ f(\V x_m)=\langle f, r_\Spc H(\cdot,\V x_m)\rangle_{\Spc H}
\end{align}
 for all $f\in \Spc H$ and any $\V x_m\inR^d$. 
 
 In the context of machine learning,
 the loss function $E$ is usually chosen to be additive with
$E(\V y, \V z)=\sum_{m=1}^M E_m\big(y_m, z_m\big)$ \cite{Scholkopf2002} \cite{Hofmann2008}.
Given a series of data points $\big(\V x_m,y_m\big)$, $m=1,\dots,M$ with $\V x_m\inR^d$, the learning problem is then to estimate
a function $f_0: \R^d \to \R$ such that
\begin{align}
\label{Eq:LearningRKHS}
f_0=\arg \min_{f \in \Spc H}  \left(\sum_{m=1}^M E_m\big(y_m, f(\V x_m)\big)  + \lambda \|f\|^2_{\Spc H}\right)
\end{align}
where $\lambda\inR^+$ is an adjustable regularization parameter.
In functional terms, the reproducing kernel represents the Schwartz kernel \cite{Gelfand-Villenkin1964} \cite{Schwartz:1966} of the
Riesz map $\Op R: \Spc H' \to \Spc H: \nu \mapsto \nu^\ast=\int_{\R^d} r_\Spc H(\cdot,\V y) \nu(\V y) \dint \V y$
so that $\nu_m^{\ast}(\V x)=\Op R\{\delta(\cdot-\V x_m)\}(\V x)=r_\Spc H(\V x,\V x_m)$.
The application of Theorem \ref{Theo:GeneralRepBanach} with $\Spc X'=\Spc H$ then immediately yields the parametric form of the solution
\begin{align}
\label{Eq:KernelExp}
f_0(\V x)=\sum_{m=1}^M a_m r_\Spc H(\V x,\V x_m),
\end{align}
which is a linear kernel expansion.
The optimality of such kernel expansions is precisely the result stated in Sch\"olkopf's representer theorem for RKHS \cite{Scholkopf2001}. Moreover, by invoking the reproducing-kernel property
\eqref{Eq:RKprop} with $f=r_\Spc H(\cdot,\V x_n) \in \Spc H$,
one readily finds that
$
\|f_0\|^2_{\Spc H}=\V a^T \M G \V a$,
where the Gram matrix $\M G \in \R^{M \times M}$ is specified by $[\M G]_{m,n}=r_\Spc H(\V x_m,\V x_n)$.
By injecting the parametric form of the solution into the cost functional in
 \eqref{Eq:LearningRKHS}, we then end up with the
equivalent finite-dimensional minimization task
\begin{align}
\label{Eq:KernelDiscrete}
\V a_0=\arg \min_{\V a \in \R^M}  \left( E\big(\V y, \M G\V a)  + \lambda \V a^T \M G \V a\right),
\end{align}
which yields the exact solution of the original infinite-dimensional optimization problem.
In short, \eqref{Eq:KernelDiscrete} is the optimal dicretization of the 
functional optimization problem \eqref{Eq:KernelExp}, which is then readily transcribable into a numerical implementation using standard (finite-dimensional) techniques. 
\subsection{Tikhonov Regularization}
\label{Sec.Tik}
Tikhonov regularization is a classical appro\-ach for dealing with ill-posed linear inverse problems \cite{Tikhonov1963} \cite{Karayiannis1990}.
The goal there is to recover a function $f: \R^d \to \R$
from a noisy or imprecise series of linear measurements
$y_m=\langle \nu_m,f \rangle + \epsilon_m$, where $\epsilon_m$ is the disturbance term.
By using the same functional framework as in Section \ref{Sec:RKHS} with $\nu_1,\dots,\nu_M \in \Spc H'=\Spc X$, and $\Spc X'=\Spc H''=\Spc H$, one formulates the recovery problem as
\begin{align}
\label{Eq:TikProb}
f_0=\arg \min_{f \in \Spc H}  \left(\sum_{m=1}^M |y_m-\langle \nu_m,f \rangle |^2  + \lambda \|f\|^2_{\Spc H}\right).
\end{align}
The application of Theorem \ref{Theo:GeneralRepBanach} then yields a solution that takes the parametric form
\begin{align}
\label{Eq:Tik}
f_0=\sum_{m=1}^M a_m \varphi_m
\end{align}
with $\varphi_m=\Op R\{\nu_m\}$, where $\Op R$ is the Riesz map 
$\Spc H'=\Spc X\to \Spc H=\Spc X'$. 
The next fundamental observation is that the bilinear form
$(\nu_m,\nu_n) \mapsto \langle \nu_m,\Op R\{\nu_n\} \rangle$ is actually the inner product for the dual space $\Spc H'$ leading to $\langle \nu_m,\varphi_n \rangle=\langle \nu_m,\nu_n\rangle_{\Spc H'}$. In fact, by using the property that $\nu_m$ and $\varphi_m=\nu_m^\ast$ are Hilbert conjugates, we have that
\begin{align}
\langle \nu_m,\varphi_n\rangle=\langle \nu_m,\nu_n\rangle_{\Spc H'}=
\langle \nu^\ast_m,\nu^\ast_n\rangle_{\Spc H}=\langle \varphi_m,\varphi_n \rangle_{\Spc H}
\end{align}
which, somewhat remarkably, shows that the underlying system matrix is equal to the Gram matrix of the basis $\{\varphi_m\}$.

Therefore, by injecting
\eqref{Eq:Tik} into the cost functional in \eqref{Eq:TikProb}, we are able to reformulate the initial optimization problem as
the finite-dimensional minimization
\begin{align}
\label{Eq:TikFiniteDim}
\V a_0=\arg \min_{\V a \in \R^M}  \left(\|\V y- \M H\V a\|^2  + \lambda \V a^T \M H \V a\right),
\end{align}
where the system/Gram matrix $\M H\in \R^{M \times M}$ with 
$[\M H]_{m,n}=\langle \nu_m,\varphi_n \rangle=\langle \varphi_m,\varphi_n \rangle_{\Spc H}$ is symmetric positive-definite. By differentiating the quadratic form in \eqref{Eq:TikFiniteDim} with respect to $\V a$ and setting the gradient to zero, we readily derive the very pleasing 
closed-form solution
\begin{align}
\V a_0=(\M H\M H\ +\lambda  \M H)^{-1} \M H\V y=(\M H +\lambda \M I)^{-1} \V y
\end{align}
under the implicit assumption that $\M H$ is invertible. We note that the latter is equivalent to the linear independence of the $\varphi_m$ (resp., the linear independence of the $\nu_m$ due to the Riesz pairing).
\subsection{Reproducing Kernel Banach Spaces}
\label{Sec:RKBS}
The concept of reproducing kernel Banach space, which is the natural generalization of RKHS, was introduced and investigated by Zhang and Xu in \cite{Zhang2009} \cite{Xu2019}. Similar to the Hilbertian case, one can identify the RKBS property as follows.

\begin{definition}
\label{Def:RKBS}
A strictly convex and reflexive Banach space $\Spc B$ of functions on $\R^d$ is called a reproducing kernel Banach space (RKBS) if
$\delta(\cdot-\V x) \in \Spc B'$ for any $\V x \inR^d$.
Then, the unique representer $r_\Spc B(\cdot,\V x)=\Op J_{\Spc B'}\{\delta(\cdot-\V x)\}
\in \Spc B$ when  indexed by $\V x$ is called the reproducing kernel of the Banach space.
\end{definition}

It is then of interest to consider the Banach variant of \eqref{Eq:LearningRKHS} that involves a slightly more general regularization term:
Given the data points $\big(\V x_m,y_m\big)_{m=1}^M$, $m=1,\dots,M$, we want to find the unique solution of the optimization problem
\begin{align}
\label{Eq:LearningRKBS}
f_0=\arg \min_{f \in \Spc B}  \left(\sum_{m=1}^M E\big(y_m, f(\V x_m)\big)  + \psi(\|f\|_{\Spc B}) \right)
\end{align}
where the loss function $E: \R \times \R \to \R$ is convex in its \revise{second} argument and the regularization strength modulated by the function $\psi: \R \to \R^+$, which is \revise{convex} and strictly increasing. Since the space $\Spc B$ is reflexive by assumption, the optimization problem falls into the framework of Theorem \ref{Theo:GeneralRepBanach} with $\Spc X=B'$ and $\Spc X'=\Spc B''=\Spc B$ 
and $\nu_m=\delta(\cdot-\V x_m) \in \Spc B', m=1,\dots,M$, where the latter inclusion is guaranteed by the RKBS property. 
We thereby obtain 
the parametric form of the solution
as
\begin{align}
\label{Eq:LearningSol}
f_0=\Op J_{\Spc B'}\left\{ \sum_{m=1}^M a_m \delta(\cdot-\V x_m)\right\}=\Op J_{\Spc B'}\left\{ \sum_{m=1}^M a_m r^\ast_{\Spc B}(\cdot,\V x_m)\right\}\quad 
\end{align}
with appropriate coefficients $(a_m)\inR^M$, where the expression on the right-hand side has been included in order to make
the connection with the Banach reproducing kernel, as in \cite{Zhang2009} \cite{Zhang2012b}.
Due to the homogeneity and invertibility of the duality mapping (see Theorem  \ref{Theo:DualityMapping}), we have that
$\Op J_{\Spc B'}\left\{ a_m r^\ast_{\Spc B}(\cdot,\V x_m)\right\}=a_m r_{\Spc B}(\cdot,\V x_m)$.
This implies that \eqref{Eq:LearningSol} yields a {\em linear expansion} in terms of kernels {\em if and only} if $M=1$ or if the duality map $\Op J_\Spc B: \Spc B' \to \Spc B$ is linear. We note that the latter condition
together with Definition \ref{Def:RKBS} is equivalent to $\Spc B=\Spc H$ being a RKHS (by Proposition \ref{Prop:Hilbert}), which brings us back to the classical setting of Section \ref{Sec:RKHS}. The same argumentation is also extendable to the vector-valued setting
which has been considered by various authors both for RKHS and RKBS settings \cite{Alvarez2012} \cite{Micchelli2005} \cite{Zhang2013}.
We also like to point our that our analysis is compatible with some recent results of Combettes {\em et
al.\ }\cite{Combettes2018}, where the corresponding conditions of optimality are stated using subdifferentials.

\subsection{Towards Compressed Sensing: $\ell_p$-Norm Regularization}
\label{Sec:ellp}
A classical problem in signal processing is to recover an unknown discrete signal $\V s 
\inR^N$
from a set of corrupted linear measurements $y_m=\V h^T_m\V s + \epsilon_m$, $m=1,\dots,M$.
The measurement vectors $\V h_1,\dots,\V h_M \in \R^N$ specify the
system matrix $\M H={[\V h_1 \ \V h_2 \  \cdots \ \M h_M]^T}$ $ \in \R^{M \times N}$.
When $M$ (the number of measurements) is less than $N$ (the size of the unknown signal $\V s$), the reconstruction problem is {\em a priori} ill-posed, and strongly so when
$M\ll N$ (compressed-sensing scenario).
However, if the original signal is known to be sparse (\ie
$\|\V s\|_{0}\le K_0$ with $K_0<2M$) and the system matrix $\M H$ satisfies some ``incoherence'' properties, then
the theory of compressed sensing provides general guarantees for a stable recovery
 \cite{Foucart2013} \cite{Candes2007} \cite{Donoho2006}.
The computational strategy then is to impose an $\ell_p$ regularization (with $p$ small to favor sparsity) on the solution and to formulate the reconstruction problem as
\begin{align}
\label{Eq:lpmin}
\V s = \arg \min_{\V x \in \R^N}  \left( E\big(\V y, \M H\V x)  + \lambda  \|\V x\|^p_{\ell_p}\right)
\end{align}
with $\|\V x\|_{\ell_p}\eqdef\left(\sum_{n=1}^N |x_n|^p\right)^{1/p}$. The traditional choice for compressed sensing is $p=1$, which is the smallest exponent that still results in a convex optimization problem.

We now show how we can use Theorem \ref{Theo:GeneralRepBanach} to characterize the effect of such a regularization 
for $p\in(1,\infty)$. The corresponding Banach space is $\Spc X'=(\R^N,\|\cdot\|_{\ell_p})$ whose predual is
$\Spc X=(\R^N,\|\cdot\|_{\ell_q})$ with $\frac{1}{p}+\frac{1}{q}=1$. Moreover, the underlying norms are strictly convex for $p>1$, which guarantees that the solution is unique, irrespective of $M$ and $\M H$. By introducing the dual signal
$\V \nu_0=\M H^T\M a \in \Spc X$ and by using the known form of the corresponding Banach $q$-to-$p$ duality map \revise{$\Op J_{\Spc X}: \Spc X \to \Spc X'$}, we then readily deduce that the solution can be represented as
\begin{align}
\label{Eq:lpminrep}
[\V s]_n=\frac{\left| [\M H^T \V a]_n)\right|^{q-1}}{\|\M H^T \V a\|^{q-2}_{\ell_q}} {\rm sign}\big([\M H^T \V a]_n\big)
\end{align}
for a suitable value of the (dual) parameter vector $\V a \inR^M$.
While the exact value of $\V a$ is data-dependent, \eqref{Eq:lpminrep} provides us with the description of the solution manifold of intrinsic dimension $M$.
Another way to put it is that the fact that $\V s$ minimizes \eqref{Eq:lpmin} induces
a nonlinear pairing between the data vector $\M y \inR^M$
and the dual variable $\V a\inR^M$ in \eqref{Eq:lpminrep}.
In particular, for $p=2$, we have that $\V s=\M H^T \V a=\sum_{m=1}^M \V h_m a_m$, which confirms the well-known result that $\V s\in {\rm span}\{\M h_m\}$. The latter also explains why classical quadratic/Tikhonov regularization performs poorly when $M$ is much smaller than $N$.

\section{Sparsity-Promoting Regularization}
\label{Sec:Sparse}
The limit case\footnote{Our analysis is not applicable to $p<1$ because the corresponding
metric no longer fulfills the properties of a norm; in other words, $\ell_p(\Z)$ fails to be a Banach space for $p\in(0,1)$.}
 of the previous scenario is $p=1$ (CS) for which the norm is no longer strictly convex.
To deal with such cases where the solution is potentially non-unique, we first recall the Krein-Milman theorem \cite[p.\ 75]{Rudin1991},
which allows us to describe the weak$^\ast$-compact solution set $S$ in Theorem \ref{Theo:GeneralRepBanach} as the convex hull of its extreme points.
We then invoke a recent result by Boyer et al.\ that yields the following characterization of the
extremal points of Problem \eqref{Eq:GenericOptimizationProb}.

\begin{theorem}
\label{Prop:sumofextremes}
All extremal points \revise{$f_{0,\rm ext}$} of the solution set $S$ of Problem \eqref{Eq:GenericOptimizationProb} can be expressed
as
\begin{align}
\label{eq:sumatoms}
\revise{f_{0,\rm ext}}=\sum_{k=1}^{K_0} a_k e_k
\end{align}
for some $1\le K_0\le M$ where the $e_k$ are some extremal points of the unit ``regularization'' ball 
$B_{\Spc X'}=\{f \in \Spc X': \|f\|_{\Spc X'}\le 1\}$ and $(a_k)\inR^{K_0}$ is a vector of appropriate weights.
\end{theorem}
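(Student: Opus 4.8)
The plan is to combine the structural results already established in Theorem~\ref{Theo:GeneralRepBanach} with the Krein--Milman theorem and a dimension-counting argument on a suitable finite-dimensional face of the solution set. First I would recall that, by Theorem~\ref{Theo:GeneralRepBanach}, the solution set $S$ is nonempty, convex, and weak$^\ast$-compact, and that the value $t_0=\|f\|_{\Spc X'}$ is constant over $S$ (indeed $\psi$ is strictly increasing, so $\psi(\|f\|_{\Spc X'})$ and the data term are separately constant on $S$, which forces $\|f\|_{\Spc X'}$ to be constant). After rescaling we may assume $t_0=1$, so that $S$ is a weak$^\ast$-compact convex subset of the unit ball $B_{\Spc X'}$; moreover $S$ is cut out of $B_{\Spc X'}$ by the $M$ affine weak$^\ast$-continuous constraints $\langle \nu_m,f\rangle=z_m$, $m=1,\dots,M$, where $\V z=\V\nu(f_0)$ is the common measurement vector. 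Thus $S=B_{\Spc X'}\cap (\V\nu^{-1}(\V z))$ is a weak$^\ast$-closed face-like slice of the ball.

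Next I would invoke the Krein--Milman theorem (valid here because $S$ is weak$^\ast$-compact and convex in the locally convex space $(\Spc X',\mathrm{w}^\ast)$) to guarantee that $S$ has extreme points, and then analyze a single extreme point $f_0$ of $S$. The key geometric fact is that an extreme point of the intersection of a convex set $C$ with an affine subspace of codimension $M$ cannot lie in the relative interior of a face of $C$ of dimension greater than $M$; equivalently, $f_0$ must lie in a face $F$ of $B_{\Spc X'}$ such that $F\cap \V\nu^{-1}(\V z)$ is finite-dimensional of dimension at most $M$ — in fact, because the $M$ linear functionals $\nu_m$ restricted to that face have a kernel of dimension at least $\dim F - M$, and any nonzero element of that kernel would let us perturb $f_0$ within $S$, so $\dim F\le M$. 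This is exactly the argument of Boyer et al.\ \cite{Boyer2018}, and I would cite it rather than reprove it: an extreme point of $S$ lies in a face $F$ of $B_{\Spc X'}$ of dimension $K_0$ with $1\le K_0\le M$, and is therefore an extreme point of $F$ itself.

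Finally I would translate the finite-dimensionality of the face $F$ into the claimed atomic decomposition. Since $F$ is a $K_0$-dimensional compact convex subset of $B_{\Spc X'}$ that is itself a face of the ball, its extreme points are extreme points of $B_{\Spc X'}$; by Minkowski's theorem (the finite-dimensional Krein--Milman) the point $f_0\in F$, being extreme in $F$, is itself one of these extreme points of $B_{\Spc X'}$ or, more generally, a convex combination of at most $K_0+1$ of them — but since $f_0$ is extreme in $F$ it actually \emph{is} an extreme point of $B_{\Spc X'}$ when $F$ is a simplex-like face, and in general Carath\'eodory's theorem on the $K_0$-dimensional face gives $f_0=\sum_{k=1}^{K_0}a_k e_k$ with $e_k\in \mathrm{ext}(B_{\Spc X'})$ and $\sum a_k$ suitably normalized. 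Writing $e_k$ for these extremal atoms and absorbing the normalization $\|f_0\|_{\Spc X'}=t_0$ into the weights $a_k\in\R$ yields \eqref{eq:sumatoms} with $1\le K_0\le M$.

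The main obstacle I anticipate is the passage from ``$f_0$ is extreme in $S=B_{\Spc X'}\cap \V\nu^{-1}(\V z)$'' to ``$f_0$ lies in a face of $B_{\Spc X'}$ of dimension at most $M$'': this is a genuinely nontrivial convex-geometry statement in infinite dimensions (one must rule out that the relevant face is infinite-dimensional with the constraint cutting it down to a point in a pathological way, and one needs the weak$^\ast$-compactness to make the face-exposure arguments go through). Rather than develop this from scratch I would lean on \cite{Boyer2018}, where precisely this dimension bound for extreme points of constrained ball-slices is established; the remaining steps (constancy of the norm on $S$, reduction to the unit ball, Krein--Milman, and Carath\'eodory on the finite-dimensional face) are then routine.
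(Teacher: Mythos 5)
Your proposal follows essentially the same route as the paper: reduce to the equivalent generalized interpolation problem \eqref{Eq:GInterpol} (common measurement vector, hence constant norm on $S$) and delegate the key dimension bound to Theorem~1 of Boyer et al.\ \cite{Boyer2018} with $j=0$ --- the paper's proof is exactly this one-line citation. One small caution in your heuristic sketch: an extreme point of $S$ lies in the \emph{relative interior} of the minimal face $F$ of $B_{\Spc X'}$ containing it, so it is generally \emph{not} an extreme point of $F$ (that would force $K_0=1$), and plain Carath\'eodory on a $K_0$-dimensional face would only give $K_0+1$ atoms; the sharper count of at most $M$ atoms comes from the perturbation argument against the $M$ affine constraints, which is precisely what the cited result supplies, so your decision to lean on it rather than reprove it is the right one.
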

The above is a direct corollary of \cite[Theorem 1 with $j=0$]{Boyer2018}
applied to an extreme point of the equivalent generalized interpolation problem  \eqref{Eq:GInterpol}. 
We also note that the existence of \revise{a minimizer $f_0\in S$} of the form
\eqref{eq:sumatoms} has been established independently by Bredies and Carioni \cite{Bredies2018} in a framework
that is even more general than the one considered here. The latter property is also directly deducible from the reduced problem \eqref{Eq:GInterpol} and a classical result by Singer {\cite[Lemma 1.3, p.\ 169]{Singer1970}}. It remains that the existence of \revise{a global minimizer} of the form \eqref{eq:sumatoms} is not as strong a result as Theorem \ref{Prop:sumofextremes}, which tells us the characterization applies for all extremal points of $S$.
Moreover, it should be pointed out that the result in Theorem \ref{Prop:sumofextremes} is not particularly informative for strictly convex spaces such as $\ell_p(\Z)$ or $L_p(\R^d)$ with $p\in(1,\infty)$ for which
all unit vectors (\ie $e\in \Spc X'$ with $\|e\|_{\Spc X'}=1$) are extremal points of the unit ball. Indeed, since the corresponding solution is unique (by Theorem \ref{Theo:GeneralRepBanach}), we trivially have that $f_0=\|f_0\|_{\Spc X'}e_1$ with $K_0=1$ and $e_1=f_0/\|f_0\|_{\Spc X'}$.

By contrast, the characterization in Theorem \ref{Prop:sumofextremes}
is highly relevant for the non-strictly convex space
$\Spc X'=\ell_1(\Z)$ whose extreme vectors are intrinsically sparse;
i.e, $e_k=(\pm\delta[n-n_k])_{n\in \Z}$  for some fixed offset $n_k\in \Z$. 
Here, $\delta[\cdot]$ denotes the Kronecker impulse
which is such that $\delta[0]=1$ and 
$\delta[n]=0$ for $n\ne0$.
Hence, the outcome is that the use of the $\ell_1$ penalty (e.g., \eqref{Eq:lpmin} with $p=1$) has a tendency to induce sparse solutions
with $\|f\|_0=K_0\le M$, which is 
the flavor of the representer theorem(s) in \cite{Unser2016}.
Two other practically-relevant examples that fall in the non-strictly convex category are considered next.

\subsection{Super-resolution Localization of Spikes}
\label{Sec:Spikes}
The space of continuous functions 
over a compact domain $\Omega \subset \R^d$ equipped with the supremum (or $L_\infty$) norm
is a classical Banach space denoted by
\begin{align}
C(\Omega)=\{f : \Omega\to \R: \|f\|_{\infty}\eqdef \sup_{\V x \in \Omega} |f(\V x)|<\infty\}.
\end{align}
Its continuous dual
\begin{align}
\Spc M(\Omega)=\{f: C(\Omega) \to \R: \|f\|_{\Spc M}\eqdef \sup_{\varphi \in C(\Omega):\, \|\varphi\|_\infty\le 1} \langle f, \varphi\rangle < \infty\}
\end{align}
is the Banach space of bounded (signed) Radon measures on $\Omega$ (by the Riesz-Markov representation theorem \cite{Rudin1987}).
Moreover, it is well known that the extreme points of the unit ball in $\Spc M(\Omega)$ are point measures (a.k.a.\ Dirac impulses)
of the form $e_k=\pm\delta(\cdot-\V x_k)$ for some $\V x_k\in\Omega$, with the property that
\begin{align}
 \varphi \mapsto \langle \delta(\cdot-\V x_k), \varphi \rangle= \varphi(\V x_k)
\end{align}
for any $\varphi \in C(\Omega)$.
For a series of (independent) analysis functions
$\nu_1,\dots,$ $\nu_M\in C(\Omega)$ (e.g., Fourier exponentials), we can invoke Theorems \ref{Theo:GeneralRepBanach} and \ref{Prop:sumofextremes}
with $\Spc X'=\Spc M(\Omega)$ 
to deduce that the extreme points of the problem
\begin{align}
\label{Eq:SpikeProb}
S=\arg \min_{f \in \Spc M(\Omega)}  \left( E\big(\V y,\V \nu(f)\big)  + \lambda \|f\|_{\Spc M}\right)
\end{align}
are inherently sparse. This means that there necessarily exists 
at least one minimizer of the form
\begin{align}
\label{Eq:SpikeSol}
f_0=\sum_{k=1}^{K_0} a_k \delta(\cdot-\V x_k)
\end{align}
with $K_0\le M$, $(a_k) \in \R^{K_0}$, and $\V x_1,\dots,\V x_{K_0} \in \Omega$. 
The fact that \eqref{Eq:SpikeProb} admits
a global solution whose representation is given by \eqref{Eq:SpikeSol} is a result that can be traced back to the work of Fisher and Jerome in \cite[Theorem 1]{Fisher1975}. This optimality result is the foundation for a recent variational method for super-resolution localization that was investigated by a number of authors \cite{Bredies2013,Candes2013b,Fernandez2016}. Besides the development of grid-free optimization schemes, researchers have worked out the conditions on $\V x_k$ and $\nu_m$ under which \eqref{Eq:SpikeProb} can provide a perfect recovery of spike trains of the form given by \eqref{Eq:SpikeSol} with a small $K_0$
\cite{Candes2014,Denoyelle2017,Poon2019}. The remarkable finding is that there are many configurations for which  super-resolution recovery is guaranteed, with an accuracy that only depends on the
signal-to-noise ratio and the minimal spacing between neighbouring spikes.

\subsection{Sparse Kernel Expansions}
\label{Sec:SparseKernel}
Schwartz' space of smooth and rapidly decaying functions on $\R^d$ is denoted by
$\Spc S(\R^d)$. Its continuous dual is $\Spc S'(\R^d)$: the space of 
tempered distributions.
In this section, $\Lop: \Spc S'(\R^d) \toC \Spc S'(\R^d)$
is an invertible operator with continuous inverse
$\Lop^{-1}: \Spc S'(\R^d) \toC \Spc S'(\R^d)$. We also assume
that the generalized impulse response of $\Lop^{-1}$ is a bivariate function of slow growth
$h: \R^d \times \R^d \to \R$. In other words,
the inverse operator $\Lop^{-1}$ has the explicit integral representation 
\begin{align}
\label{Eq:invkernel}
\Lop^{-1}\{\varphi\}=\int_{\R^d} h(\cdot,\V y) \varphi(\V y) \dint \V y
\end{align}
for any $\varphi \in \Spc S(\R^d)$.
In conformity with the nomenclature of \cite{Unser2017}, 
the native Banach space for $\big(\Lop,\Spc M(\R^d)\big)$ is
\begin{align}
\Spc M_\Lop(\R^d)=\{f\in \Spc S'(\R^d): \|\Lop f\|_{\Spc M}\eqdef \sup_{\varphi \in \Spc S(\R^d): \, \|\varphi\|_\infty\le 1} \langle \Lop f, \varphi\rangle < \infty\}.
\end{align}
It is isometrically isomorphic to
$\Spc M(\R^d)$ (the space of bounded Radon measures on $\R^d$). This is to say that the operators $\Lop, \Lop^{-1}$ have restrictions $\Lop: \Spc M_\Lop(\R^d) \toC \Spc M(\R^d)$ and 
$\Lop^{-1}: \Spc M(\R^d) \toC \Spc M_\Lop(\R^d)$ that are isometries. 
Consequently, we can apply Theorem \ref{Theo:GeneralRepBanach} to deduce that the generic learning problem
\begin{align}
\label{Eq:sparsekernelProb}
S=\arg \min_{f \in \Spc M_\Lop(\R^d)}  \left(\sum_{m=1}^M E_m\big(y_m, f(\V x_m)\big)  + \lambda \|\Lop f\|_{\Spc M}\right)
\end{align}
admits a solution, albeit not necessarily a unique one since the underlying search space
$\Spc M_\Lop(\R^d)$---or, equivalently, the parent space $\Spc M(\R^d)$---is neither reflexive nor strictly convex.

In order to refine the above statement
with the help of Theorem  \ref{Prop:sumofextremes}, we 
first observe that the extreme points of the unit ball in $\Spc M(\R^d)$
take the form $e_k=\pm \delta(\cdot-\V \tau_k)$ with $\V \tau_k\inR^d$, which is consistent with the result in Section \ref{Sec:Spikes} for $\Spc M(\Omega)$.
Since the map $\Lop^{-1}: \Spc M(\R^d) \toC \Spc M_\Lop(\R^d)$
is isometric, this allows us to identify the extreme points of the unit ball in $\Spc M_\Lop(\R^d)$ as
\begin{align}
u_k=\Lop^{-1}\{e_k\}=\pm \Lop^{-1}\{\delta(\cdot-\V \tau_k)\}=\pm h(\cdot,\V \tau_k)
\end{align}
where $h: \R^d \times \R^d \to \R$ is the kernel of the operator in
\eqref{Eq:invkernel}. Consequently, we can invoke
Theorem \ref{Prop:sumofextremes} 
to prove that the extreme points of Problem \eqref{Eq:sparsekernelProb}
are all expressible as
\begin{align}
\label{Eq:sparsekernel}
f_0(\V x)=\sum_{k=1}^{K_0} a_k h(\V x,\V \tau_k)
\end{align}
with parameters $K_0\le M$, $\V \tau_1,\dots,\V \tau_{K_0} \inR^d$, and $(a_k)\inR^{K_0}$.
Moreover, since $\Lop\{h(\cdot,\V \tau_k)\}=\delta(\cdot-\V \tau_k)$ and
$\|\delta(\cdot-\V \tau_k)\|_{\Spc M}=\|e_k\|_{\Spc M}=1$, the optimal regularization cost is
$\|\Lop f_0\|_{\Spc M}=\sum_{k=1}^{K_0}|a_k|=\|\V a\|_{\ell_1}$, which makes an interesting connection
with $\ell_1$-norm minimization and the generalized LASSO \cite{Tibshirani1996} \cite{Roth2004}.
To sum up, the solution \eqref{Eq:sparsekernel} has a kernel expansion that is similar to \eqref{Eq:KernelExp}, with the important twist that the kernel centers $\V \tau_k$ are adaptive, meaning that their location as well as their cardinality $K_0$ is data-dependent. In effect, it is the underlying $\ell_1$-norm penalty that helps reducing the number $K_0$ of active kernels, thereby producing a sparse solution. 
\revise{We should also point out that the form of the solution is compatible with the empirical method of moving and learning the data centers in kernel expansions (see \cite[Section IV]{Poggio1990b}) with the important difference that the present proposal is purely variational}.

When $\Lop: \Spc S'(\R^d) \toC \Spc S'(\R^d)$ is linear shift-invariant (LSI) with frequency response $\Fourier\big\{\Lop\delta\big\}(\bw)=\widehat L(\bw)$, then
$h(\V x,\V \tau)=h_{\rm LSI}(\V x-\V \tau)$ with 
\begin{align}
\label{Eq:kernelFourier}
h_{\rm LSI}(\V x)=\Fourier^{-1}\left\{\frac{1}{\widehat L(\bw)}\right\}(\V x),
\end{align}
where the operator $\Fourier^{-1}: \Spc S'(\R^d) \to \Spc S'(\R^d)$
is the generalized inverse Fourier transform.

The overarching message in the optimality result of the present section is that the choice of the regularization operator $\Lop$ in \eqref{Eq:sparsekernelProb} predetermines the parametric form of the kernel in \eqref{Eq:sparsekernel}. Now, in light of \eqref{Eq:kernelFourier}, we can choose to specify first a kernel 
$h_{\rm LSI}: \R^d \to \R$ and then infer the frequency response of the corresponding regularization operator
\begin{align}
\label{Eq:frequencyresponse}
\widehat L(\bw)=\frac{1}{\widehat h_{\rm LSI}(\bw)}.
\end{align}
Now, the necessary and sufficient condition for the continuity of $\Lop:  \Spc S'(\R^d) \to \Spc S'(\R^d)$
is that the function $\widehat L: \R^d \to \R$ be smooth and slowly growing \cite{Schwartz:1966}.
A parametric class of kernels that meets this admissibility requirement is the super-exponential family
\begin{align}
h_{\rm LSI}(\V x)=\exp\left(-\|\V x\|^{\alpha}\right)
\end{align}
with $\alpha\in(0,2)$. The limit case with $\alpha=2$ (Gaussian) is excluded
because the corresponding frequency response in \eqref{Eq:frequencyresponse} (inverse of a Gaussian) fails to be slowly increasing.

\section{Conclusion}
We have shown that the fundamental ingredient in the quest for a representer theorem is the identification and characterization of a dual pair of Banach spaces that is linked to the regularization functional. 
The main result of the paper is expressed by Theorem 
\ref{Theo:GeneralRepBanach}, which is valid for Banach spaces in general. 
This characterization of the solution of the general optimization problem \eqref{Eq:GenericOptimizationProb} is directly exploitable in the reflexive and strictly convex scenario---in which case the solution is also known to be unique---whenever the duality mapping is known.
While our formulation also offers interesting insights for certain non-strictly convex and sparsity-promoting norms such as 
$\|\cdot\|_{\ell_1}$ and its continuous-domain counterpart---the total variation $\|\cdot\|_{\Spc M}$ and generalization thereof---it raises intriguing questions about the unicity of such solutions and the necessity to
develop some corresponding numerical optimization schemes.

We have made the link with the existing literature 
in machine learning (regression) and the resolution of ill-posed inverse problems
by considering several concrete cases, including reproducing kernel Hilbert spaces (RKHS) and compressed sensing. 
The conciseness and self-containedness of the proposed derivations is a good indication of the power of the approach.

Since the concept of Banach spaces is remarkably general, one can easily conceive of other variations around the common theme of regularization and representer theorems.
Potential topics for further research include the use of nonstandard norms, the deployment of hybrid regularization schemes, vector-valued functions or feature maps \cite{Alvarez2012}, and the consideration of direct-sum spaces and semi-norms, as in the theory of splines \cite{deBoor1966} \cite{Duchon1977} \cite{Wahba1990} \cite{rabut:2004} \cite{Mosamam2010} \cite{Unser2017}. In short, there is ample room for additional theoretical and practical investigations, in direct analogy with what has been accomplished during the past few decades in the simpler but more restrictive context of RKHS \cite{Argyriou2009,Alvarez2012}.
Interestingly, there also appears to be a link with deep neural/kernel networks, as has been demonstrated recently \cite{Bohn2018,Unser2019}.

\subsection*{Acknowledgments}
The research was partially supported by the Swiss National Science
Foundation under Grant 200020-162343.
The author is thankful to Julien Fageot, Shayan Aziznejad, Pakshal Bohra, Quentin Denoyelle and Philippe Th{\'e}venaz for helpful comments on the manuscript.


\bibliographystyle{spmpsci} 
%
%
%
%
%
\bibliography{Unification}
\end{document}